\theoremstyle{plain}
\newtheorem{defi}{\definame}
\newtheorem{theo}[defi]{\theoname}
\newtheorem{lemm}[defi]{\lemmname}
\theoremstyle{definition}
\theoremstyle{remark}
\newtheorem{rema}[defi]{\remaname}
\let\cal\mathcal
\let\bb\mathbb
\let\epsilon\varepsilon
\def\Q{{\bf Q}} \def\Z{{\bf Z}}
\def\C{{\bf C}}
\def\N{{\bf N}}
\def\O{{\cal O}}
\def\wO{\widehat\O}
\def\B{\O{\bb B}_{\rm dR}^{\scriptscriptstyle +}} 
\def\Bdr{{\bb B}_{\rm dR}^{\scriptscriptstyle +}} 
\def\S{{\overline S}} 
\def\A{\O{\bb A}_{\rm inf}}
\def\Ainf{{\bb A}_{\rm inf}}
\begin{document}
\title{Une caract\'erisation de $\Bdr$ et $\B$}
\author{Pierre Colmez}
\address{CNRS, IMJ-PRG, Sorbonne Universit\'e, 4 place Jussieu, 75005 Paris, France}
\email{pierre.colmez@imj-prg.fr}
\begin{abstract}
Nous montrons
que ${\bf B}_{\rm dR}^+$ est l'\'epaississement universel de $\C_p$.
Plus g\'en\'eralement, nous montrons que, si $S$ est une alg\`ebre affino\"{i}de
r\'eduite, $\B(\S)$ est le $S$-\'epaississement
universel du compl\'et\'e de $\S$.
\end{abstract}
\begin{altabstract}
We show that
${\bf B}_{\rm dR}^+$ is the universal thickening of $\C_p$.
More generally, we show that, if $S$ is a reduced affinoid algebra,
$\B(\S)$ is the universal $S$-thickening
of the completion of $\S$.
\end{altabstract}

\maketitle
\let\languagename\relax

Fontaine a introduit l'anneau ${\bf B}_{\rm dR}^+$ dans~\cite{Fo1},
et syst\'ematis\'e la construction dans le cadre relatif dans~\cite{Fo2}. Il a
aussi explor\'e les propri\'et\'es universelles de ${\bf B}_{\rm dR}^+$ d'un point de vue galoisien
(cf.~\cite[prop.\,3.2]{Fo3}), ou celles de l'anneau ${\bf A}_{\rm inf}$, pierre angulaire 
de la construction de ${\bf B}_{\rm dR}^+$, du point de vue \'epaississement
infinit\'esimal~\cite{Fo2}.  Le th.\,\ref{1} ci-dessous\footnote{Qui
r\'epond \`a une question que m'a pos\'ee Alexander Petrov lors d'une conf\'erence \`a Carg\`ese.}
(ainsi que le th.\,\ref{4}) caract\'erise ${\bf B}_{\rm dR}^+$ de ce point de vue infinit\'esimal.

\section{Diff\'erentielles de K\"ahler et \'epaississement universel}
Soit $S$ une alg\`ebre affino\"{i}de r\'eduite\footnote{Le cas absolu
correspond \`a $S=\Q_p$, auquel cas $\wO_\S[\frac{1}{p}]=\C_p$.}, 
munie de la valuation spectrale $v_{\rm sp}$,
et soit $\O_S$
l'anneau de ses entiers (i.e.~l'ensemble des $f\in S$, v\'erifiant $v_{\rm sp}(f)\geq 0$).
Soit $\overline S$ l'extension \'etale maximale de $S$, et soient $\O_{\overline S}$
l'anneau de ses entiers et $\wO_\S$ son compl\'et\'e pour la topologie
$p$-adique.  Fontaine~\cite{Fo2} a d\'efini l'anneau $\A(\S)$ comme
le $\O_S$-\'epaississement universel de $\wO_\S$: 
on dispose d'un morphisme surjectif de $\O_S$-alg\`ebres $\theta:\A(\S)\to\wO_\S$ et, si
$\theta_A:A\to \wO_\S$ est {\it un $\O_S$-\'epaississement de $\wO_\S$} 
(i.e.~$A$ est une $\O_S$-alg\`ebre s\'epar\'ee et compl\`ete pour la topologie $p$-adique,
$\theta_A$ est surjectif, et il existe $k\in\N$ tel que
$({\rm Ker}\,\theta_A)^{k+1}=0$),
il existe
un unique morphisme de $\O_S$-alg\`ebres $\alpha_A:\A(\S)\to A$ tel que 
$\theta_A\circ\alpha_A=\theta$.

A partir de $\A(\S)$, on construit l'anneau $\B(\S)$ comme le compl\'et\'e
de $\A(\S)[\frac{1}{p}]$ pour la topologie ${\rm Ker}\,\theta$-adique (o\`u l'on a \'etendu
$\theta$ en un morphisme $\A(\S)[\frac{1}{p}]\to \wO_\S[\frac{1}{p}]$), et on \'etend
$\theta$ par continuit\'e en $\theta:\B(\S)\to \wO_\S[\frac{1}{p}]$.
Nous allons prouver (th.\,\ref{1}) que $\B(\S)$ est le $S$-\'epaississement universel
de $\wO_\S[\frac{1}{p}]$.

Avant d'\'enoncer le r\'esultat, rappelons que l'on dispose~\cite[th.\,4.3]{bald} 
\footnote{Il vaut mieux utiliser~\cite{bald}
que l'appendice de~\cite{Fo2} car celui-ci comporte un trou, comme il est expliqu\'e
dans~\cite[note\,1]{bald}.} 
d'une autre construction
de $\B(\S)$, comme compl\'et\'e
de $\S$ pour la topologie ci-dessous.
On d\'efinit, par r\'ecurrence, des sous-$\O_S$-alg\`ebres $\O^{(k)}$ de $\O_{\overline S}$, par
$$\O^{(0)}:=\O_{\overline S}, \quad \O^{(k+1)}:= 
{\rm Ker}\big(\O^{(k)}\to\O^{(0)}\otimes_{\O^{(k)}}\Omega_{\O^{(k)}/\O_S}\big)$$
Comme $\overline S$ est \'etale sur $S$, on a $\O^{(k)}[\frac{1}{p}]=\overline S$ pour tout $k$.
Soit alors $B^{(k)}:=\wO^{(k)}[\frac{1}{p}]$, o\`u
$\wO^{(k)}:=(\varprojlim_n \O^{(k)}/p^n)$, et soit $\B(\S):=
\varprojlim_k B^{(k)}$.  
Chaque $B^{(k)}$ est naturellement une $S$-alg\`ebre de Banach, et on
on munit $\B(\overline S)$ de la topologie de la limite projective, ce qui en fait
un fr\'echet. Notons que,
 par d\'efinition,
$$B^{(0)}=\wO_\S[\tfrac{1}{p}]$$

On appelle {\it $S$-\'epaississement de $\wO_\S[\frac{1}{p}]$} un morphisme
surjectif $\theta_B:B\to B^{(0)}$ de $S$-alg\`ebres de Banach tel
qu'il existe $k\in\N$ tel que
$({\rm Ker}\,\theta_B)^{k+1}=0$ (le plus petit tel $k$ est {\it l'ordre de
l'\'epaississement}).
\begin{theo}\label{1}
Si $\theta_B:B\to B^{(0)}$ est un $S$-\'epaississement,
il existe un unique morphisme continu $\alpha_B:\B(\S)\to B$ tel
que $\theta_B\circ\alpha_B=\theta$.
\end{theo}
\begin{proof}
Le lemme de Hensel permet de relever $\overline S$, de mani\`ere unique, dans~$B$.
L'unicit\'e de $\B(\overline S)\to B$ r\'esulte donc de la densit\'e de $\S$ dans 
$\B(\S)$.

Pour prouver le th\'eor\`eme, il suffit donc de prouver que, si $({\rm Ker}\,\theta_B)^{k+1}=0$,
on peut construire $B^{(k)}\to B$.  
Pour cela, il suffit de prouver que $\O^{(k)}\subset\S\subset B$ est born\'e dans $B$.
On raisonne par r\'ecurrence sur $k$.  

Soit $B'$ le quotient de $B$ par l'adh\'erence
$J$ de $({\rm Ker}\,\theta_B)^{k}$, de telle sorte que 
$B'\to B^{(0)}$ est un \'epaississement d'ordre $k-1$.
L'hypoth\`ese de r\'ecurrence implique que $\O^{(k-1)}$ est contenu dans un born\'e
$W'=p^{-N} B'_0$ de $B'$, o\`u $B'_0$ est
la boule unit\'e de $B'$. 
Soit $B_0$ la boule unit\'e de $B$; alors l'image de $B_0\to B'_0$ contient $p^{L}B'_0$ 
(th\'eor\`eme de 
l'image ouverte).
 Soit $W=p^{-N-L}B_0$; alors l'image de $W\to B'$ contient $W'$ 
et $J\cap W$ est un r\'eseau
de $J$.  Par continuit\'e de la multiplication, il existe $M$ tel que
$W\cdot W\subset p^{-M}W$.  
En particulier, l'action de $B$ sur $J$ se factorise \`a travers $B^{(0)}$ et
$\wO^{(0)}\cdot (W\cap J)\subset p^{-M}(W\cap J)$; on peut donc, quitte \`a remplacer
$W$ par $W+(\wO^{(0)}\cdot (W\cap J))$, supposer que $W\cap J$ est un $\wO^{(0)}$-module.

Si $x\in \O^{(k-1)}$, choisissons $\tilde x\in W$ ayant
pour image $x$ dans $B'$, et notons $\delta(x)$ l'image de $x-\tilde x$ dans 
le $\O^{(0)}$-module $J/p^{-M}(W\cap J)$; 
cette image ne d\'epend pas du choix de $\tilde x$ (c'est d\'ej\`a vrai
dans $J/(W\cap J)$).  De plus,
$\delta(xy)$ est l'image modulo
$p^{-M}(W\cap J)$ de $x(y-\tilde y)+(x-\tilde x)\tilde y-\widetilde{xy}+\tilde x\tilde y$. 
Or $\widetilde{xy}-\tilde x\tilde y\in p^{-M}W\cap J$, et donc
$\delta(xy)=x\delta(y)+y\delta(x)$.

Par la propri\'et\'e universelle de $\Omega_{\O^{(k)}/\O_S}$
et par d\'efinition de $\O^{(k)}$, la d\'erivation $\delta$
s'annule sur $\O^{(k)}$; autrement dit, on a $\O^{(k)}\subset W+p^{-M}(W\cap J)\subset p^{-M}W$,
ce qui permet de conclure.
\end{proof}

\section{De $\A$ \`a $\B$}
On donne (rem.\,\ref{3}) une seconde preuve du th.\,\ref{1} qui permet aussi de prouver
(th.\,\ref{4}) que $\Bdr(\S)$ est l'\'epaississement universel de $\wO_\S[\frac{1}{p}]$. 
\begin{rema}\label{3}
On peut aussi d\'eduire le th.\,\ref{1} de l'universalit\'e
de $\A(\S)$ en utilisant le lemme~\ref{2} ci-dessous\footnote{
Qui permet aussi d'all\'eger les hypoth\`eses de~\cite[prop.\,3.2]{Fo3}.}.
En effet, si $\theta_B:B\to B^{(0)}=\wO_\S[\frac{1}{p}]$ est un \'epaississement,
le lemme fournit 
une $\O_S$ alg\`ebre $A\subset B$, ouverte (et donc aussi ferm\'ee) et born\'ee (et donc
s\'epar\'ee et compl\`ete pour la topologie $p$-adique),
telle que $\theta_B$ induise une surjection $A\to \wO_\S$. L'universalit\'e de
$\A(\S)$ fournit un morphisme $\alpha:\A(\S)\to A$ tel que $\theta_B\circ\alpha_A$ soit
$\theta:\A(\S)\to\wO_\S$.
Si $({\rm Ker}\,\theta_B)^{k+1}=0$,
$\alpha$ se factorise par $\A(\S)/({\rm Ker}\,\theta)^{k+1}$, et donc induit
un morphisme 
$$\B(\S)/({\rm Ker}\,\theta)^{k+1}=
\A(\S)[\tfrac{1}{p}]/({\rm Ker}\,\theta)^{k+1}\to
\wO_\S[\tfrac{1}{p}]=B^{(0)}$$
  Ceci nous fournit le morphisme surjectif $\B(\S)\to B^{(0)}$ cherch\'e.

 l'unicit\'e d'un tel morphisme
est une cons\'equence de la densit\'e de $\S$ comme nous l'avons d\'ej\`a remarqu\'e.
Elle peut aussi se d\'eduire du (ii) du lemme~\ref{2}. En effet, si
$\alpha_1,\alpha_2$ sont deux tels morphismes, alors
$A_i:=\alpha_i(\A(\S))$ est une sous-$\O_S$-alg\`ebre born\'ee de $B$ pour $i=1,2$.
Il r\'esulte des (i) et (ii) du lemme~\ref{2} que l'on peut trouver un 
$\O_S$-\'epaississement
$A\subset B$ de $\wO_\S$ contenant $A_1$ et $A_2$.  L'universalit\'e
de $\A(\S)$ implique qu'il existe un unique $\alpha:\A(\S)\to A$ tel que
$\theta_B\circ\alpha=\theta$. Il en r\'esulte que $\alpha_1=\alpha_2$
sur $\A(\S)$, donc aussi (par $\Z_p$-lin\'earit\'e)
sur $\A(\S)[\frac{1}{p}]$, et donc aussi (par continuit\'e
et densit\'e de $\A(\S)[\frac{1}{p}]$) sur $\B(\S)$.
\end{rema}

\begin{lemm}\label{2}
{\rm (i)} Si $\theta_B:B\to\wO_\S[\frac{1}{p}]$ est un $S$-\'epaississement, alors
$B$ contient un sous-anneau ouvert et born\'e, qui est
un $\O_S$-\'epaississement de $\wO_\S$.

{\rm (ii)} Si $A_1\subset B$ est un $\O_S$-\'epaississement ouvert de
$\wO_\S$, et si $A_2$ est un sous-anneau born\'e de $B$,
il existe un $\O_S$-\'epaississement $A\subset B$ de
$\wO_\S$ contenant $A_1$ et $A_2$.
\end{lemm}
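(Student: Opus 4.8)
For part (i), the goal is to manufacture a ring inside $B$ that is simultaneously open, bounded, and maps onto $\wO_\S$ with nilpotent kernel. The natural starting point is the unit ball $B_0$ of $B$: it is bounded but is only a $\O_S$-module, not a ring, and multiplication only satisfies $B_0\cdot B_0\subset p^{-M}B_0$ for some $M$ by continuity of multiplication. So the plan is to pass to $A:=\sum_{n\ge 0}p^{nM}B_0^{\,n+1}$, or more cleanly to take the $\O_S$-subalgebra generated by $p^{M}B_0$; since $p^{M}B_0\cdot p^{M}B_0\subset p^{M}B_0$, this is a bounded $\O_S$-algebra, and it is open because it contains $p^{M}B_0$, which is a neighbourhood of $0$. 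Its image under $\theta_B$ is then an open bounded $\O_S$-subalgebra of $\wO_\S[\frac1p]$ surjecting onto the unit ball; one checks that $\theta_B$ restricted to $A$ is onto $\wO_\S$ (after possibly enlarging $A$ by finitely many $p$-power multiples to absorb the unit ball of $\wO_\S$), and that $({\rm Ker}\,\theta_B\cap A)^{k+1}\subset({\rm Ker}\,\theta_B)^{k+1}=0$. The completeness and separation for the $p$-adic topology follow because $A$ is bounded and closed in the Banach space $B$ — boundedness forces the $p$-adic topology on $A$ to coincide with the subspace topology.

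For part (ii), I am handed an open bounded $\O_S$-epaississement $A_1$ and an arbitrary bounded subring $A_2$, and must fit both inside a single epaississement $A\subset B$. The idea is to take $A$ to be the $\O_S$-subalgebra of $B$ generated by $A_1\cup A_2$, i.e. $A:=A_1[A_2]$. This is still bounded: $A_1$ and $A_2$ are each bounded, their product set is bounded by continuity of multiplication, and since $A_1$ is open it contains $p^{M}B_0$ for some $M$, so finite sums of products of elements of $A_1$ and $A_2$ stay inside a fixed $p^{-L}B_0$ (here one uses that a bounded set generates, under multiplication by the open ring $A_1$, only a bounded set — this is where the argument has to be run carefully). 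It is open since it contains $A_1$. The map $\theta_B$ sends $A$ onto a subring of $\wO_\S[\frac1p]$ containing $\theta_B(A_1)=\wO_\S$; one must check this image is exactly $\wO_\S$, which holds because $A$ is bounded and $\theta_B$ is continuous, so $\theta_B(A)$ is a bounded subring of $\wO_\S[\frac1p]$ containing $\wO_\S$, hence equal to $\wO_\S$ (as $\wO_\S$ is the full unit ball). Finally $({\rm Ker}\,\theta_B\cap A)^{k+1}=0$ since it sits inside $({\rm Ker}\,\theta_B)^{k+1}=0$, and $p$-adic separation/completeness follow from boundedness and closedness exactly as in (i) — here one replaces $A$ by its closure, which remains bounded and an $\O_S$-algebra, to guarantee completeness.

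The main obstacle I anticipate is not the algebra but the functional analysis: verifying that the subalgebra generated by a bounded set together with an open bounded ring is again \emph{bounded}. Boundedness of the generated algebra is genuinely delicate because a priori products of arbitrarily many generators could escape every ball; the saving grace is that $A_1$ being open means $p^{M}A_2\subset A_1$ for large $M$, so every element of $A_2$ is, up to a fixed power of $p$, already in $A_1$, and then $A_1[A_2]\subset p^{-NM}A_1$ for a suitable $N$ depending on how many factors one allows — but since $A_1$ is a ring, no bound on the number of factors is needed once we absorb the $A_2$-factors. Making this precise, and similarly pinning down the constant $M$ in part (i) so that $p^{M}B_0$ is closed under multiplication, is the one place where I would slow down and write things out. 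Everything else reduces to the two formal observations that a bounded closed subset of a Banach space is $p$-adically complete and separated in the subspace topology, and that $\wO_\S$ is the unit ball of $\wO_\S[\frac1p]$.
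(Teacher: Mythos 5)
The gap is in part (i): the $\O_S$-subalgebra generated by $p^{M}B_0$ is $\O_S+p^{M}B_0$ (since $p^{M}B_0\cdot p^{M}B_0\subset p^{M}B_0$), and it is indeed open and bounded, but it does \emph{not} surject onto $\wO_\S$. Its image under $\theta_B$ is $\O_S+p^{M}\theta_B(B_0)$, hence is contained in $\O_S+p^{M-L}\wO_\S$ for some $L$, and $\wO_\S/p^{m}$ is not generated by $\O_S$ (compare $\O_{\C_p}/p$ with $\Z_p/p$ in the absolute case). For the same reason your proposed repair --- enlarging $A$ by ``finitely many $p$-power multiples'' --- cannot work: the defect $\wO_\S/(\O_S+p^{m}\wO_\S)$ is not finitely generated over $\O_S$. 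To surject one must start from a bounded open $\O_S$-module $W$ with $\theta_B(W)=\wO_\S$ (open mapping theorem), and then the whole difficulty of (i) appears: $W\cdot W$ only lands in $W+p^{-M}(W\cap{\rm Ker}\,\theta_B)$, and the correction terms, which live in the kernel, must themselves be multiplied. The paper resolves this by induction on the order $k$: it passes to $B'=B/J$ with $J$ the closure of $I^{k}$, lifts the thickening $A'\subset B'$ to a bounded open module $W\subset B$, and takes $A=W+p^{-M}(W\cap J)$; the point is that $J\cdot J=0$ (because $I^{2k}=0$ for $k\geq 1$ and multiplication is continuous), so the cross terms $p^{-2M}(W\cap J)^{2}$ vanish and $A$ is a ring. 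Some such device --- the induction, or a finite sum $\sum_{j}p^{-jM}(\cdots\cap I^{j})$ terminating because $I^{k+1}=0$ --- is indispensable, and it is exactly what your (i) lacks.

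Part (ii) is essentially correct and a touch slicker than the paper's construction $A=A_1+p^{-N}I_1+p^{-2N}I_1^{2}+\cdots$, but the boundedness argument should be stated for the right reason: it is because $A_2$ is a \emph{ring} that every monomial in elements of $A_2$ already lies in $A_2\subset p^{-N}A_1$, whence $A_1[A_2]\subset p^{-N}A_1$. Your phrasing (absorbing the $A_2$-factors one at a time into $A_1$) would accumulate a power $p^{-Nm}$ with the number $m$ of factors and prove nothing; for a bounded subset that is not a subring the statement is false. The remaining points of (ii) --- $\theta_B(A)\subset\wO_\S$ because a bounded subring of $\wO_\S[\frac1p]$ consists of power-bounded elements, openness because $A\supset A_1$, and $p$-adic completeness because an open bounded subgroup is closed and carries the subspace topology --- are fine and match what the paper uses implicitly.
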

\begin{proof}
On fait une r\'ecurrence sur l'ordre $k$ de l'\'epaississement.
Si $k=0$, il n'y a rien \`a prouver.  Si $k\geq 1$, et si
$I={\rm Ker}(B\to\wO_\S[\frac{1}{p}])$ (on a $I^{k+1}=0$), soit $J$ l'adh\'erence
de $I^{k}$, et soit $B'=B/J$.  L'hypoth\`ese de r\'ecurrence fournit 
une $\O_S$-alg\`ebre $A'\subset B'$, ouverte (et donc aussi ferm\'ee)
et born\'ee, se surjectant sur $\wO_\S$.

Choisissons $W\subset B$, un $\O_S$-module ouvert born\'e, 
se surjectant sur $A'$ (on part d'un sous-$\O_S$-module born\'e de $B$ dont l'image
dans $B'$ contient $A'$, et on prend l'image inverse de $A'$ dans ce born\'e; l'existence
d'un tel born\'e r\'esulte de ce que la suite exacte $0\to J\to B\to B'\to 0$ est une suite
exacte stricte de banachs, cf.~preuve du th.\,\ref{1}).  Par continuit\'e de la multiplication,
il existe $M\in\N$ tel que $W\cdot W\subset p^{-M}W$; en particulier, $A'\cdot (W\cap J)\subset
p^{-M}(W\cap J)$ et on peut remplacer $W$ par $W+(A'\cdot (W\cap J))$ pour assurer
que $W\cap J$ soit un $A'$-module.
Soit alors $A=W+p^{-M}(W\cap J)$.  Si $a_1,a_2\in W$ et $b_1, b_2\in (W\cap J)$,
on a $(a_1+p^{-M}b_1)(a_2+p^{-M}b_2)=a_1a_2+p^{-M}(a_1b_2+a_2b_1)\in A$ car
$a_1a_2\in p^{-M}W\cap(W+J)=W+p^{-M}(W\cap J)=A$, et $a_1b_2+a_2b_1\in (W\cap J)$.
Il s'ensuit que $A$ est un sous-anneau ouvert born\'e de $B$, se surjectant sur $A'$ et donc
aussi sur $\wO_\S$.

Ceci prouve le (i). Maintenant, si $A_1\subset B$ est un $\O_S$-\'epaississement ouvert
de $\wO_\S$, et si $A_2$ est un sous-anneau born\'e de $B$,
il existe $N$ tel que $A_2\subset p^{-N}A_1$.  Mais $\theta_B(A_2)\subset \wO_\S$,
et donc $A_2\subset A_1+p^{-N}I_1$, o\`u $I_1=A_1\cap{\rm Ker}\,\theta_B$.
Soit $A=A_1+p^{-N}I_1+p^{-2N}I_1^2+\cdots$ (il n'y a qu'un nombre fini de termes
non nuls car $I_1^{k+1}=0$ si notre \'epaississement est d'ordre $k$). Alors
$A$ est un sous-anneau born\'e de $B$ qui contient $A_1$ et $A_2$.

Ceci permet de conclure.
\end{proof}

\begin{rema}
Au lieu de $\O_S$-\'epaississements de $\wO_\S$, on peut regarder les
\'epaississements $\theta_A:A\to\wO_\S$ de $\wO_\S$ (i.e.~on ne demande pas que 
$\theta_A$ soit un morphisme de $\O_S$-alg\`ebres ni m\^eme que $A$ soit une $\O_S$-alg\`ebre).
L'\'epaississement universel dans ce cadre est alors $\theta:\Ainf(\S)\to\wO_\S$ 
(cf.~\cite{Fo2}), et on d\'efinit
$\Bdr(\S)$ comme le compl\'et\'e de $\Ainf(\S)[\frac{1}{p}]$ pour la topologie ${\rm Ker}\,\theta$-adique.
Le lemme~\ref{2} et preuve du th.\,\ref{1} donn\'ee dans la rem.\,\ref{3} s'\'etendent
verbatim \`a ce cadre (contrairement \`a la premi\`ere preuve utilisant les diff\'erentielles
de K\"ahler).  On a donc le r\'esultat suivant.
\end{rema}

\begin{theo}\label{4}
$\Bdr(\S)$ est l'\'epaississement universel de $\wO_\S[\frac{1}{p}]$.
\end{theo}

\begin{rema}
Si on sp\'ecialise la situation au cas o\`u
$S$ est une extension finie $K$ de $\Q_p$, les th.\,\ref{1} et~\ref{4} 
deviennent:
${\bf B}_{\rm dR}^+$ est \`a la fois l'\'epaississement universel et le $K$-\'epaississement
universel de $\C_p$.
\end{rema}

\end{document}